\numberwithin{equation}{section}
\newtheorem{theorem}{Theorem}[section]
\newtheorem{lemma}[theorem]{Lemma}
\theoremstyle{definition}
\newtheorem{question}[theorem]{Question}
\newcommand{\approxlt}{\underset{\raisebox{0.7ex}{$\scriptscriptstyle\sim$}}{<}}
\title[Singular solutions for an elliptic system in the plane]{Prescribing singularities of weak solutions\\ of a nonlinear elliptic system in the plane}
\author{Bogdan Petraszczuk}
\address[B.~Petraszczuk]{
  \newline \indent
  Warsaw Doctoral School of Mathematics and Informatics,
\newline \indent
  Banacha~2,
  02-097 Warsaw, Poland}
\email{bogdan.petraszczuk@gmail.com}
\begin{document}

\begin{abstract}
Inspired by Frehse's \cite{J.Frehse} 1973 work, we show that his elliptic system $\Delta u = F(u, \nabla u)$ in the plane has bounded weak solutions $u$ with arbitrarily prescribed singular sets.
\end{abstract}

\maketitle

\section{Introduction}
In this paper, we show that a specific differential system 
in the plane 
\begin{equation}\label{2-dimensional general equation}
    \Delta u = F(u, \nabla u),
\end{equation}
where \(F\) is an analytic function with quadratic growth in \(\nabla u\), has bounded weak solutions with pretty wild discontinuities. Notably, for every predetermined compact set $K \subset \subset B$ there is a solution $u\in W^{1,2}\cap L^{\infty}(B, \mathbb{R}^2)$ which is singular on $K$ and smooth elsewhere, cf. Theorem \ref{Main theorem} below.
 
In '73, J.Frehse \cite{J.Frehse}, provided an illustrative example of a two-dimensional solution to the Equation (\ref{2-dimensional general equation}) which is bounded and discontinuous at the origin. More precisely, for a ball $B:= \mathbb{B}^{2}(0,\, e^{-1})$ and $u \in W^{1,2}(B, \mathbb{R}^2)$ with the right hand side $F$ defined as follows
\begin{equation}\label{Frehse diff equation}
F(u, \nabla u) = (F_1, F_2) = \biggl(-2|\nabla u|^2 \frac{u_1 + u_2}{1 + |u|^2},\, 2|\nabla u|^2 \frac{u_1 - u_2}{1 + |u|^2}\biggr).
\end{equation}
and $u$ defined by the equations:
\begin{equation}\label{Frehse function}
    u_1(x) = \sin(\log\log(|x|^{-1})), \hspace{0.5 cm} u_2(x) = \cos(\log\log(|x|^{-1})),
\end{equation}
it can be demonstrated that $\Delta u = F(u, \nabla u)$ holds weakly in $B.$ This illustrates that without additional assumptions on $u$ or on the right-hand side $F$, solutions of Equation (\ref{2-dimensional general equation}) may lack regularity. The additional assumption that $u$ is bounded does not trivialize the problem: the H\"older norm of the solution to Equation (\ref{2-dimensional general equation}) cannot be bounded solely by its $L^{\infty}$-norm (see Frehse \cite{J.Frehse}). Using the same right hand side $F$ as in (\ref{Frehse diff equation}) and modifying $u$ in (\ref{Frehse function}), we demonstrate a method to prescribe singularities on any predetermined compact subset of the domain.

\begin{theorem}\label{Main theorem}
Fix a small radius $0  < r < \frac{1}{e}$ and consider the ball $B := B(0, r) \subset \mathbb{R}^2$. For every compact subset $K$ within the ball $B$, there exists a solution \\
$u \in W^{1,2}(B, \, \mathbb{R}^2) \cap L^{\infty}$ to a nonlinear elliptic system: 
\begin{equation}\label{Frehse diff equation 2}
    \Delta u = F(u, \nabla u),
\end{equation}
where $F$ defined in (\ref{Frehse diff equation}). This solution $u$ is singular on $K$ and smooth elsewhere.
\end{theorem}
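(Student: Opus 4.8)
The plan is to reduce the system to a statement about positive harmonic functions, and then build one with a prescribed set of logarithmic poles. First I would record the algebraic mechanism behind Frehse's example: if $h>0$ is harmonic on a domain and $g:=\log h$, then $\Delta g=\Delta h/h-|\nabla h|^2/h^2=-|\nabla g|^2$, while for $\psi(t):=(\sin t,\cos t)$ one has $|\psi|\equiv1$, $|\psi'|\equiv1$, $\psi''=-\psi$. Hence $u:=\psi(g)$ satisfies $|u|\equiv1$ (so $1+|u|^2=2$), $|\nabla u|^2=|\nabla g|^2$, and
\[
\Delta u=\psi''(g)|\nabla g|^2+\psi'(g)\Delta g=-\bigl(\psi(g)+\psi'(g)\bigr)|\nabla g|^2 .
\]
Since $\psi(g)+\psi'(g)=(u_1+u_2,\,u_2-u_1)$, reading off components shows this is exactly $\Delta u=F(u,\nabla u)$ with $F$ as in \eqref{Frehse diff equation}; Frehse's function is the case $h=-\log|x|$. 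So the task becomes: choose a positive harmonic $h$ on $B\setminus K$ so that $u=\psi(\log h)$ is bounded, lies in $W^{1,2}(B)$, and is singular precisely on $K$.

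Second, I would construct $h$ by superposing logarithmic poles along a countable dense set. Fix $\{p_j\}_{j\ge1}$ dense in $K$, a constant $C>e\cdot\operatorname{diam}B$, and a sequence $a_j>0$ with $\sum_j a_j<\infty$ and $\sum_j a_j^{\,1-\theta}<\infty$ for some $\theta\in(\tfrac12,1)$ (e.g.\ $a_j=2^{-j}$), and set $h_j(x):=a_j\log\frac{C}{|x-p_j|}$, $h:=\sum_j h_j$. Each $h_j$ is harmonic with $h_j\ge a_j\log\frac{C}{\operatorname{diam}B}>a_j$, so $h\ge\sum_j a_j=:m>0$; the series converges locally uniformly on $B\setminus K$ (there $h$ is positive and harmonic), $h\in L^1(B)$ by Tonelli, and $h$ is lower semicontinuous into $(0,+\infty]$ with $\limsup_{x\to q}h(x)=+\infty$ for every $q\in K$ (pick $p_j$ near $q$ and let $x\to p_j$).

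Third, I would verify the three required properties of $u=\psi(\log h)$. Boundedness is immediate ($|u|\equiv1$, so also $F(u,\nabla u)\in L^1(B)$ because $|F|\le2|\nabla u|^2$). On $B\setminus K$, $h$ is positive and harmonic, hence real-analytic, so $u$ is smooth there and solves the system classically by the first step. At any $q\in K$, $\log h$ takes arbitrarily large values on every neighbourhood of $q$, so $\psi\circ\log h$ sweeps the whole circle and $u$ has essential oscillation $2$ at $q$; thus $u$ admits no continuous representative near $q$ and the singular set is exactly $K$. For the Dirichlet energy, $|\nabla u|=|\nabla\log h|\le\sum_j|\nabla h_j|/h$, and using $h\ge\max(h_j,m)\ge h_j^{\theta}m^{1-\theta}$,
\[
\frac{|\nabla h_j|}{h}\ \le\ \frac{a_j^{\,1-\theta}}{m^{\,1-\theta}}\cdot\frac{1}{|x-p_j|\,\bigl(\log\frac{C}{|x-p_j|}\bigr)^{\theta}},
\]
whose $L^2(B)$-norm is $\le A\,a_j^{\,1-\theta}$ with $A=A(\theta,C,\operatorname{diam}B,m)<\infty$ since $2\theta>1$; summing over $j$ with $\sum_j a_j^{\,1-\theta}<\infty$ gives $\nabla u\in L^2(B)$. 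Finally the weak identity $\int_B(\nabla u\cdot\nabla\varphi+F(u,\nabla u)\varphi)=0$, valid for $\varphi$ supported in $B\setminus K$, is extended to all $\varphi\in C_c^\infty(B)$ by the usual cutoff/approximation argument.

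I expect the main obstacle to be the global energy bound $\int_B|\nabla\log h|^2<\infty$: $h$ has a dense set of logarithmic poles, so $\int_B|\nabla h|^2=+\infty$ and one cannot avoid exploiting the denominator. The interpolation $h\ge h_j^{\theta}m^{1-\theta}$ with $\theta>\tfrac12$ is precisely what converts each pole's contribution into a convergent Frehse-type integral $\int_B|x-p_j|^{-2}\bigl(\log\frac1{|x-p_j|}\bigr)^{-2\theta}\,dx$, after which the decay of $a_j$ makes the sum over $j$ converge. Secondary delicate points are ensuring $u$ is singular at \emph{every} point of $K$ and not merely on the dense subset $\{p_j\}$ — handled via the lower semicontinuity of $h$ — and checking that the equation genuinely passes to a weak solution across $K$.
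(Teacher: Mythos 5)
Your construction of $u=\psi(\log h)$ with $h=\sum_j a_j\log\frac{C}{|x-p_j|}$ is essentially the same as the paper's (the paper writes $\log\frac{1}{|x-p_j|}$ and uses the hypothesis $r<1/e$ so that each term is already positive on $B_r$; your auxiliary constant $C$ plays the same role), and your energy estimate via the interpolation $h\ge\max(h_j,m)\ge h_j^{\theta}m^{1-\theta}$ is in fact cleaner than the paper's two-step Cauchy--Schwarz plus Young computation --- it produces the crucial $(\log\frac{1}{|x-p_j|})^{-2\theta}$ weight in one stroke, after which Minkowski's inequality in $L^2$ finishes. Your explicit observations that $h$ is harmonic (hence $u$ real-analytic) on $B\setminus K$, and that $\limsup_{x\to q}h(x)=+\infty$ at every $q\in K$ so that $u$ oscillates across the whole circle near $q$, are also correct and are spelled out more carefully than in the paper.

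There is, however, a genuine gap in the last step, passing from the classical equation on $B\setminus K$ to the weak equation on all of $B$. You assert that the weak identity, known for $\varphi$ compactly supported in $B\setminus K$, ``is extended to all $\varphi\in C_c^\infty(B)$ by the usual cutoff/approximation argument.'' That argument multiplies $\varphi$ by cutoffs $\eta_k$ vanishing near $K$ with $\eta_k\to1$ a.e.\ and $\|\nabla\eta_k\|_{L^2}\to0$; the existence of such $\eta_k$ is precisely the statement that $K$ has zero $W^{1,2}$-capacity. But $K$ is an arbitrary compact subset of $B$ and may have positive Lebesgue measure, hence positive capacity (zero capacity forces zero measure), in which case no such cutoffs exist and the removability argument breaks down. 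The paper circumvents this by introducing the finite truncations $u^N=\psi(\log f_N)$ with $f_N=\sum_{i\le N}a_i\log\frac{1}{|x-p_i|}$: each $u^N$ is singular only on the finite set $\{p_1,\dots,p_N\}$, which does have zero $W^{1,2}$-capacity in the plane, so an explicit logarithmic cutoff $\zeta_i^k=\max\{\min\{\log\log\frac{1}{|x-p_i|}-k,\,1\},\,0\}$ proves the weak equation for each $u^N$ on all of $B$; one then lets $N\to\infty$ in the weak formulation by dominated convergence, using the same integrable majorant of $|\nabla u^N|^2$ that underlies the energy estimate. Without this truncation step, your argument is only valid for compact sets $K$ of zero capacity, not for the arbitrary compact $K$ the theorem promises.
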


In higher-dimensional settings, elliptic differential equations of the type
\begin{equation} \label{General equation}
    -\text{div}(|\nabla u|^{p - 2}\nabla u) = F(u, \nabla u),
\end{equation}
where $|F(u, \nabla u)| < C |\nabla u|^p$ and $u \in W^{1,\, p}$, can exhibit solutions with singularities. Typical example is a sphere projection map $u(x) = \frac{x}{|x|} \in W^{1,n}(B^n(0, 1),\, \mathcal{S}^{n - 1})$ which is $p$-harmonic for every $p < n$, i.e. satisfies 
\[
    -\text{div}(|\nabla u|^{p - 2}\nabla u) = |\nabla u|^p u
\]
and is irregular at $x = 0.$ Another celebrated example provided by T.Rivi\'ere \cite{Riviere} showing that the general situation is even worse: for $n = 3$ there exists everywhere discontinuous map $u \in W^{1,2}(B^3(0,1),\, \mathcal{S}^2)$ solving 
\[
    \Delta u = |\nabla u|^2 u.
\]
The critical $p = n$ also does not guarantee any regularity for functions $u \in W^{1,n}(B^n)$ as solutions to Equation (\ref{General equation}) with $F \in L^{1}.$ Indeed for $n > 2$, the $u(x) = \sin(\log^{\alpha}(1/|x|))$ for $0 < \alpha < 1 - \frac{2}{n}$ satisfies (see N.Firoozye \cite{Firoozye})
\begin{equation*}
        u \notin C^{0} \hspace{0.2 cm} {\rm{but}}\, \, {\rm{div}}(|\nabla u|^{n - 2}\nabla u) \in L^{1}(\mathbb{R}^n).
    \end{equation*}
We recall a related question, posed by T.Rivi\'ere \cite{Riviere}, which --- up to the best of the author's knowledge --- still remains unresolved.

\begin{question}
    Let $K$ will be a fixed compact subset of the unit ball $B:= B^3(0,1) \subset \mathbb{R}^3.$ Is it possible to find a map $u \in W^{1,2}(B, S^{2})$ that weakly solves the equation
    \[
        -\Delta u = |\nabla u|^2 u
    \]
    and is singular in $K$ while being smooth elsewhere?
\end{question}

\section{Notation}
The ball centered at a point $x \in \mathbb{R}^n$ with radius $r > 0$ is denoted by $B^{n}(x,r)$. If it does not lead to misunderstanding we write simply $B(x,r)$ or just $B_r$. The $W^{k,p}(\Omega,\, \mathbb{R}^2)$  is the set of maps $u: \Omega \rightarrow \mathbb{R}^2$ belonging to the standard Sobolev $W^{k, p}$ space.  The euclidean norm of $x = (x_1,x_2,\ldots, x_n) \in \mathbb{R}^n$ is denoted by $|x|$.

\section{Proof of Theorem \ref{Main theorem}}
Let us fix arbitrary countable dense subset $P := \{p_1,p_2,\ldots\} \subset K$. We provide an exact formula for the solution $u$, which is singular on $\overline{P}$ and smooth  on $\tilde{B} := B_r \setminus P$. In the spirit of \cite{J.Frehse} we denote
\begin{equation*}
    f(x) = \log(1/|x|).    
\end{equation*}
Recall that $f$ is harmonic and it holds
\[
    \Delta f = \delta_{0}
\]
in the sense of distributions $D'(B_r)$. Let $u$ will be defined as in (\ref{Frehse function}). The most natural approach to extending the singularity of such $u$ to a set set $P$ is to define 
\begin{equation*}
    \Tilde{u}(x) = \sum_{i = 1}^{\infty}a_i u(x - p_i),
\end{equation*}
where $a_i > 0$ is a sequence decreasing sufficiently fast (e.g., geometrically) to zero.
In this way, it is straightforward to demonstrate that $ \tilde{u} \in W^{1,2}(B_r)$. Unfortunately, due to the nonlinear structure of the right-hand side (\ref{Frehse diff equation}), verifying $\Delta \tilde{u} =  \tilde{F}(\tilde{u}, \nabla \tilde{u})$, where $\tilde{F}$ exhibits quadratic growth in the gradient, becomes highly intricate. 
To solve this issue, we actually modify $u$ by modifying the function $f$, i.e. we put 
\[
    f(x) = \sum_{i = 1}^{\infty}a_{i}\log\Big(\frac{1}{|x - p_i|} \Big).
\]
With this new $f$, we consider
\begin{equation}\label{Our function u}
    u_1 = \sin\log f, \hspace{0.5 cm} u_2 = \cos\log f
\end{equation}
and its finite version 
\[
   u_1^N = \sin\log f_N, \hspace{0.5 cm} u_2^N = \cos\log f_N,
\]
where 
\[
    f_N = \sum_{i = 1}^{N}a_{i}\log\Big(\frac{1}{|x - p_i|} \Big).
\]
Let $P^N := \{p_1, \ldots, p_n\}$ and $\Tilde{B}^N := B_r \setminus P^N$. Before proceeding with further computations, we present a road-map for our argument. Our initial step involves demonstrating that each $u^{N}$ serves as a classical solution to Equation (\ref{Frehse diff equation 2}) within $\Tilde{B}^{N}$. Following this, we intend to prove that both individual $u^N$ and the limit  $u = \lim\limits_{N \rightarrow \infty} u^N$ are elements of $W^{1,2}(B, \mathbb{R}^2)$ , as referenced in Lemma \ref{First lemma}. Lastly, we will verify that both $u^N$ and $u$ are weak solutions to Equation (\ref{Frehse diff equation 2}) throughout the entire $B^N$ and $B$ respectively (see Lemma \ref{Second lemma}).

Computing derivatives of $u^N$ in $B^N$ we get
\[
\partial_{\alpha} u_1^N = f_N^{-1}(\partial_{\alpha}f_N) \cos\log f_N,
\]
\[
\partial_{\alpha} u_2^N = -f_N^{-1}(\partial_{\alpha}f_N) \sin\log f_N.
\]
In the same way we compute the second derivatives
\[
    \partial_{\alpha}^2u_1^N = f_N^{-1}(\partial_{\alpha}^2 f_N) \cos\log f_N - f_N^{-2}(\partial_{\alpha} f_N)^{2}(\cos\log f_N + \sin \log f_N),
\]
\[
     \partial_{\alpha}^2u_2^N = -f_N^{-1}(\partial_{\alpha}^2 f_N) \sin\log f_N + f_N^{-2}(\partial_{\alpha} f_N)^{2}(\sin \log f_N - \cos\log f_N).
\]
Now we can compute $|\nabla u^N|^2$ and $\Delta u_1^N, \, \Delta u_2^N$ as follows:
\begin{equation}\label{Length of gradient of u}
\begin{split}
    |\nabla u^N|^2 &= |\nabla u_1^N|^2 + |\nabla u_2^N|^2 \\
    &= f_N^{-2}|\nabla f_N|^2 \cos^{2}\log f_N + f_N^{-2}|\nabla f_N|^2 \sin^2 \log f_N \\
    & = f_N^{-2}|\nabla f_N|^2,
\end{split}
\end{equation}
\begin{equation}\label{final differential equation 1}
\begin{split}
\Delta u_1^N &= f_N^{-1} \Delta f_N \sin\log f_N - f_N^{-2}|\nabla f_N|^2(\cos\log f_N + \sin \log f_N) \\
& =  -2|\nabla u^N|^2(u_1^N + u_2^N)\frac{1}{1 + |u^N|^2},
\end{split}
\end{equation}
\begin{equation}\label{final differential equation 2}
\begin{split}
    \Delta u_2^N &= f^{-1}_N \Delta f_N \sin\log f_N + f^{-2}_N|\nabla f_N|^2(\sin\log f_N - \cos \log f_N) \\
    & =  2|\nabla u^N|^2(u_1^N - u_2^N)\frac{1}{1 + |u^N|^2}.
\end{split}
\end{equation}
The last equation holds on $\Tilde{B}^N$ since $|u^N| = 1$ and $\Delta f^{N} = 0.$ To complete the proof, it is necessary to verify that each $u^N$ and $u = \lim\limits_{N \rightarrow \infty}u^N$ indeed belong to $W^{1,2} \cap L^{\infty}$ on the entire ball $B_r$, and that both $u^N$ and $u$ serve as weak solutions to Equation (\ref{Frehse diff equation 2}) within $B_r.$

\begin{lemma}\label{First lemma}
    Let $u: B_{r} \rightarrow \mathbb{R}^2$ be the function defined in Equation (\ref{Our function u}). Assuming that $a_n \searrow 0$ is a geometric sequence of positive numbers, it follows that $u$ is a~bounded function in $W^{1,2}$.
\end{lemma}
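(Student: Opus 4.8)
The plan is to prove the gradient bound for the finite truncations $u^N$ uniformly in $N$ and then pass to the limit; this also records $u^N\in W^{1,2}$, as needed in the road-map. Write $g_i(x):=\log\frac1{|x-p_i|}$ and $c_0:=\log\frac1{2r}$, which is positive since $r<\frac1e$. As $p_i\in K\subset B_r$, every $x\in B_r$ satisfies $|x-p_i|<2r$, hence $g_i(x)>c_0>0$; consequently $f_N\ge a_1c_0$ and $f\ge c_0\sum_i a_i>0$ pointwise on $B_r$. Moreover $\int_{B_r}g_i\le\int_{B(p_i,2r)}\log\frac1{|y|}\,dy=:M_0<\infty$ uniformly in $i$, so $\int_{B_r}f\le M_0\sum_i a_i<\infty$; in particular $f$ is finite a.e., $u=(\sin\log f,\cos\log f)$ is well defined a.e.\ with $|u|\equiv1$, and likewise $|u^N|\equiv1$, so $u,u^N\in L^\infty(B_r,\mathbb R^2)$ with sup-norm $1$.

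Fix $N$ and work on $B_r\setminus\{p_1,\dots,p_N\}$, where $f_N$ is real-analytic and positive; by \eqref{Length of gradient of u}, $|\nabla u^N|^2=f_N^{-2}|\nabla f_N|^2=|\nabla\log f_N|^2$. The algebraic key is that $\nabla\log f_N$ is a \emph{convex combination} of the vector fields $\nabla\log g_i$: since $\nabla g_i=g_i\,\nabla\log g_i$,
\[
\nabla\log f_N=\frac{\sum_{i=1}^N a_i\nabla g_i}{f_N}=\sum_{i=1}^N w_i^N\,\nabla\log g_i,\qquad w_i^N:=\frac{a_ig_i}{f_N}\ge0,\quad\sum_{i=1}^N w_i^N=1 .
\]
Convexity of $v\mapsto|v|^2$ then gives the pointwise bound $|\nabla u^N|^2\le\sum_{i=1}^N w_i^N|\nabla\log g_i|^2$, where $|\nabla\log g_i(x)|^2=|x-p_i|^{-2}\bigl(\log\tfrac1{|x-p_i|}\bigr)^{-2}$.

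The crux is to estimate $\int_{B_r}w_i^N|\nabla\log g_i|^2\,dx$ summably in $i$, and this is where a naive use of Minkowski's or the Cauchy--Schwarz inequality fails: the ``$\log$--$\log$'' integral $\int_{B(p_i,2r)}|x-p_i|^{-2}\bigl(\log\tfrac1{|x-p_i|}\bigr)^{-2}\,dx=\tfrac{2\pi}{c_0}$ is finite but carries no decay in $i$, so discarding $w_i^N$ makes the $i$-sum diverge. The gain is obtained by trading the radius of a small ball around $p_i$ against the weight. For $s_i\in(0,2r)$, on $B(p_i,s_i)$ use $w_i^N\le1$ and compute (via $t=\log\tfrac1\rho$) $\int_{B(p_i,s_i)}|x-p_i|^{-2}\bigl(\log\tfrac1{|x-p_i|}\bigr)^{-2}\,dx=\tfrac{2\pi}{\log(1/s_i)}$; on $B_r\setminus B(p_i,s_i)$ one has $g_i\le\log\tfrac1{s_i}$, hence $w_i^N\le a_i\log\tfrac1{s_i}/(a_1c_0)$, while $\int_{B_r\setminus B(p_i,s_i)}|\nabla\log g_i|^2\le\tfrac{2\pi}{c_0}$. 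Adding the two contributions and minimizing over $s_i$, the choice $s_i=\exp\!\bigl(-c_0\sqrt{a_1/a_i}\,\bigr)$ (which is $<2r$ for $i\ge2$) gives $\int_{B_r}w_i^N|\nabla\log g_i|^2\,dx\le\tfrac{4\pi}{c_0\sqrt{a_1}}\sqrt{a_i}$ for $i\ge2$, the term $i=1$ being bounded by $\tfrac{2\pi}{c_0}$. Since $a_i$ is geometric, so is $\sqrt{a_i}$, and summing over $i\le N$ yields $\int_{B_r}|\nabla u^N|^2\le C(r,a_1)\sum_{i\ge1}\sqrt{a_i}=:C_0<\infty$, uniformly in $N$.

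Finally I pass to the limit. Each $u^N$ is bounded and smooth off the finite set $\{p_1,\dots,p_N\}$, which has vanishing $W^{1,2}$-capacity in the plane, so a standard removability argument (logarithmic cut-offs around the $p_j$ together with the uniform $L^\infty$ and $L^2$-gradient bounds) gives $u^N\in W^{1,2}(B_r,\mathbb R^2)$ with $\|\nabla u^N\|_{L^2}^2\le C_0$. Since $a_ig_i>0$, the partial sums $f_N$ increase to $f$, which is positive and finite a.e.; hence $\log f_N\to\log f$ and $u^N\to u$ a.e., and by dominated convergence ($|u^N|\le1$) also in $L^2(B_r)$. By reflexivity of $W^{1,2}$, a subsequence of $(u^N)$ converges weakly in $W^{1,2}$ to some $v\in W^{1,2}(B_r,\mathbb R^2)$, and since the same subsequence converges weakly in $L^2$ we must have $v=u$. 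Thus $u\in W^{1,2}(B_r,\mathbb R^2)$, with $\|\nabla u\|_{L^2}^2\le\liminf_N\|\nabla u^N\|_{L^2}^2\le C_0$ and $\|u\|_{L^\infty}=1$, proving the lemma. The only genuinely delicate step is the per-term estimate above: the argument stands or falls on choosing $s_i$ so that neither the near-$p_i$ contribution (where merely $w_i^N\le1$ is available) nor the far contribution (where $w_i^N$ is small) destroys summability in $i$.
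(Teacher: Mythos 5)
Your proof is correct, and it takes a genuinely different route from the paper's. The paper works directly with the quotient $|\nabla f|^2/f^2$: it applies Cauchy--Schwarz with auxiliary weights $a_i^\beta$ to $|\nabla f|^2 \lesssim \bigl(\sum_i a_i/|x-p_i|\bigr)^2$, bounds the denominator $f^2$ from below by keeping one diagonal term and one cross term, and then invokes Young's inequality to land on an $L^1$ bound of the form $\sum_i a_i^{\gamma}\bigl(|x-p_i|^2\log^{1+1/q}\tfrac1{|x-p_i|}\bigr)^{-1}$; summability is obtained by tuning the free parameters $\beta,p,q$ through conditions \eqref{first assumption}--\eqref{second assumption}. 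You instead isolate one structural fact: since $\nabla g_i = g_i\,\nabla\log g_i$, the field $\nabla\log f_N$ is a pointwise \emph{convex combination} $\sum_i w_i^N\nabla\log g_i$ with $w_i^N=a_ig_i/f_N\ge0$ and $\sum_i w_i^N=1$, so Jensen applied to $|\cdot|^2$ gives $|\nabla u^N|^2\le\sum_i w_i^N|\nabla\log g_i|^2$ immediately. The per-term estimate is then a clean two-region computation in which the splitting radius $s_i$ is optimized against the weight $w_i^N$, producing the decay $\int_{B_r}w_i^N|\nabla\log g_i|^2\lesssim\sqrt{a_i}$, which is summable for any geometric sequence. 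This buys a shorter derivation with no exponent bookkeeping, a bound $\|\nabla u^N\|_{L^2}^2\le C_0$ that is manifestly uniform in $N$, and it requires only $\sum_i\sqrt{a_i}<\infty$, which is weaker than the pair of conditions \eqref{two conditions} used in the paper (indeed $\sum a_i^{1/3}<\infty$ implies $\sum a_i^{1/2}<\infty$ since $a_i<1$). The passage to the limit you give --- capacity-zero removability of $P^N$ to conclude $u^N\in W^{1,2}(B_r)$, monotone convergence $f_N\nearrow f$ with the uniform lower bound $f_N\ge a_1c_0>0$ guaranteeing $u^N\to u$ a.e., and weak $W^{1,2}$ compactness --- is standard and consistent with the paper, which instead estimates the infinite sum directly; both are sound.
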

\begin{proof}
   The fact that $u$ is bounded  and that $u \in L^{2}(B_r)$ is obvious. Hence it is enough to prove that $\nabla u \in L^{2}(B_r).$ Using computations in Equation (\ref{Length of gradient of u}) we need to prove that $|\nabla f|^2 / f^2 \in L^{2}(B_r)$. Using an elementary computations we get
   \[
        \partial_{\alpha} f = \sum_{i = 1}^{\infty}a_{i}\frac{(x - p_i)_{\alpha}}{|x - p_i|^{2}}.
   \]
   Hence
   \[
       |\nabla f|^2 \approxlt \Big(\sum_{i = 1}^{\infty}\frac{a_i}{|x - p_i|}\Big)^2.
   \]
   We fix some small positive $\beta > 0$ and use Cauchy-Schwartz inequality to get 
   \[
            \Big(\sum_{i = 1}^{\infty}\frac{a_i}{|x - p_i|}\Big)^2 = \Big(\sum_{i = 1}^{\infty}a_i^{\beta}\frac{a_i^{1 - \beta}}{|x - p_i|}\Big)^2 \leq \Big(\sum_{i = 1}^{\infty}a_i^{\beta}\Big) \cdot \sum_{i = 1}^{\infty}\frac{a_i^{2 - 2\beta}}{|x - p_i|^2}.
     \]
  We assume 
  \begin{equation}\label{first assumption}
      C = \sum_{i = 1}^\infty a^{\beta}_i < \infty.
  \end{equation}
 Thus we get
    \begin{equation*}
        \begin{split}
            |\nabla f|^2 / f^2 &\leq \frac{C \sum_{i = 1}^{\infty}a_i^{2 - 2\beta}|x - p_i|^{-2}}{\big(\sum_{j = 0}^{\infty}a_j \log |x - p_j|^{-1}\big)^2} \\
            & = \sum_{i = 1}^{\infty}\frac{a_i^{2 - 2\beta}}{|x - p_i|^2}\cdot \frac{1}{\biggl(\underbrace{\sum_{j = 0}^{\infty}a_j \log |x - p_j|^{-1}\biggl)^2}}_{:= M} = (\ast).
        \end{split}
    \end{equation*}
 Since the ball $B_{r}$ is small enough, we can assume that each term
    $a_j \log \frac{1}{|x - p_j|}$ is strictly positive. This observation allows us to estimate the denominator ${M}$ as follows
    \[
        M \geq a_i^2\log^2 \frac{1}{|x - p_i|} + 2a_i a_1\log\frac{1}{|x - p_i|} \log\frac{1}{|x - p_1|}.
    \]
    Assuming $a_1 = 1,\, p_1 = 0$ and denoting $\lambda := \inf_{x \in B_r}(\log\frac{1}{|x|}) > 0$ we get
    \[
            M \geq a_j^2\log^2 \frac{1}{|x - p_i|} + 2\lambda a_j \log\frac{1}{|x - p_i|}.
    \]
    We fix $p, q \geq 1$ such that $\frac{1}{p} + \frac{1}{q} = 1$ and recall that for each numbers $x, y > 0$ the next inequality holds
    \[
        x\cdot y \leq \frac{x^p}{p} + \frac{y^p}{q} \leq x^p + y^q.
    \]
    Thus, by applying the aforementioned inequalities, we obtain
    \begin{equation}\label{estimating of gradient of u}
        \begin{split}
            (\ast) &\leq \sum_{i = 1}^{\infty}\frac{a_i^{2 - 2\beta}}{|x - p_i|^2}\cdot \frac{1}{a_j^2\log^2 |x - p_i|^{-1} + 2\lambda a_j \log|x - p_i|^{-1}} \\
            & \leq \sum_{i = 1}^{\infty}\frac{a_i^{2 - 2\beta}}{|x - p_i|^2}\cdot \frac{1}{(a_i^2 \log^2|x - p_i|^{-1})^{1/p}\cdot (2\lambda a_i \log|x - p_i|^{-1})^{1/q}}\\
            & \leq \tilde{C} \sum_{i = 1}^{\infty}\frac{a_i^{1 - 2\beta - 1/q}}{|x - p_i|^2 \log^{1 + 1/q}|x - p_i|^{-1}}
        \end{split}
    \end{equation}
    Writing 
    \[
    b_i := \int_{B_{r}}\frac{1}{|x - p_i|^2 \log^{1 + 1/q}|x - p_i|^{-1}}\, dx
    \] 
    and integrating the above inequality over the ball $B_r$, we obtain 
    \begin{equation}\label{integral}
          \Tilde{C}\sum_{i = 1}^{\infty} a_i^{1 - 2\beta - 1/q}\int_{B_{r}}\frac{1}{|x - p_i|^2 \log^{1 + 1/q}|x - p_i|^{-1}}\, dx = \sum_{i = 1}^{\infty}a_i^{1 - 2\beta - 1/q} b_i.
    \end{equation}
      Note that the whole sequence $b_i$ is bounded. Indeed each of the $b_i$ is finite because $1 + \frac{1}{q} >   1$. Let us fix arbitrary index $i$ and radius $\Tilde{r} << r$, let moreover $B_{i} := B(p_i, \Tilde{r})$. Using the change of variable we write 
      \begin{equation*}
      \begin{split}
          b_i &= \Biggl(\int_{B_i} + \int_{B_r \setminus B_i}\Biggr) \frac{1}{|x - p_i|^2 \log^{1 + 1/q}|x - p_i|^{-1}}\, dx \\
          & = \int_{B(0, \Tilde{r})}\frac{1}{|x|^2 \log^{1 + 1/q}|x|^{-1}}\, dx + \int_{B_r \setminus B_i}\frac{1}{|x - p_i|^2 \log^{1 + 1/q}|x - p_i|^{-1}}\, dx\\
          &:= \text{I} + \text{II}.
      \end{split}
      \end{equation*}
      The first term $\text{I}$ is bounded by constant $C = C(\Tilde{r}).$ The second term $\text{II}$ is also bounded because each $x \in B_r \setminus B_i$ satisfy 
      \[
          0 < \varepsilon < |x - p_i| < r
      \]
      for some fixed constant $\varepsilon > 0$ and so 
      \[
        \text{II} < \int_{B_r} \frac{1}{\varepsilon^2 \log^{1 + 1/q}\varepsilon^{-1}}\, dx \leq C(\varepsilon, r).
      \]
    Hence by combining those two inequalities it follows that $b_i \leq C(\varepsilon, r, \Tilde{r})$. These constant is independent of index $i$ and so the sequence $b_i$ is bounded.
      In that case the expression in (\ref{integral}) is bounded if  
    \begin{equation}\label{second assumption}
        \sum_{i = 1}^{\infty}a_i^{1 - 2\beta - 1/q} < \infty.
    \end{equation}
   To ensure that it is possible to find such parameters $\beta, q, p$ that (\ref{second assumption}) and (\ref{first assumption}) are satisfied, one might choose $\beta = \frac{1}{4}$ and $q = 6$, leading to $1 - 2\beta - 1/q = 1/3$. Consequently, $a_i$ must fulfill the conditions
    \begin{equation}\label{two conditions}
         \sum_{i = 1}^{\infty}a_i^{1/2} < \infty, \quad \sum_{i = 1}^{\infty} a_{i}^{1/3} < \infty.
    \end{equation}
    Observe that if $a_i = q^i$ for a certain $q \in (0,1)$, then both $a_i^{1/2} = (q^{1/2})^i$ and $a_i^{1/3} = (q^{1 / 3})^i$ form geometric sequence with $q^{1/2} \in (0,1)$ and $q^{1/3} \in (0,1)$, respectively. Consequently, this ensures that the conditions in (\ref{two conditions}) are satisfied.
\end{proof}

Now we prove that $u^N$ and $u$ are weak solutions to Equation (\ref{Frehse diff equation 2}) on the whole ball $B_r$.
\begin{lemma}\label{Second lemma}
    Assume $u$ defined in Equation (\ref{Our function u}) satisfies Lemma \ref{First lemma}. Then, for each $\phi \in C^{\infty}_{0}(B_r)$ the next equation holds
    \begin{equation}\label{weak equation for our u}
        \int_{B_r}\nabla u \cdot \nabla \phi\, dx = \int_{B_{r}}F(u,\nabla u) \phi\, dx,
    \end{equation}
    where $F$ is defined in (\ref{Frehse diff equation}).
\end{lemma}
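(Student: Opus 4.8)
The plan is to derive the weak identity (\ref{weak equation for our u}) in two stages: first prove it for every finite truncation $u^N$ on the \emph{whole} ball $B_r$, and then pass to the limit $N\to\infty$.

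\textbf{Removing the poles of $u^N$.} By (\ref{final differential equation 1})--(\ref{final differential equation 2}) the map $u^N$ solves (\ref{Frehse diff equation 2}) classically on $\tilde B^N=B_r\setminus P^N$, and the computation behind Lemma \ref{First lemma} applies verbatim to $u^N$, so $u^N\in W^{1,2}\cap L^\infty(B_r)$ and $|F(u^N,\nabla u^N)|\le 2|\nabla u^N|^2\in L^1(B_r)$. Since $P^N$ is finite it has zero $W^{1,2}$-capacity in the plane, hence it can be cut out of the test functions: fix $\phi\in C_0^\infty(B_r)$ and choose logarithmic cutoffs $\eta_\varepsilon\in C_0^\infty(B_r)$ with $0\le\eta_\varepsilon\le1$, $\eta_\varepsilon\equiv0$ on $\bigcup_{i\le N}B(p_i,\varepsilon)$, $\eta_\varepsilon\equiv1$ off $\bigcup_{i\le N}B(p_i,\sqrt\varepsilon)$, and $\|\nabla\eta_\varepsilon\|_{L^2(B_r)}\to0$. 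Then $\eta_\varepsilon\phi\in C_0^\infty(\tilde B^N)$, and integrating the classical equation by parts yields (\ref{weak equation for our u}) with $(u,\phi)$ replaced by $(u^N,\eta_\varepsilon\phi)$, i.e.
\[
  \int_{B_r}\eta_\varepsilon\,\nabla u^N\cdot\nabla\phi\,dx
  +\int_{B_r}\phi\,\nabla u^N\cdot\nabla\eta_\varepsilon\,dx
  =\int_{B_r}F(u^N,\nabla u^N)\,\eta_\varepsilon\phi\,dx .
\]
The middle integral is $\le\|\phi\|_\infty\,\|\nabla u^N\|_{L^2}\,\|\nabla\eta_\varepsilon\|_{L^2}\to0$ by Cauchy--Schwarz, while the other two converge as $\varepsilon\to0$ by dominated convergence ($\eta_\varepsilon\to1$ pointwise, $0\le\eta_\varepsilon\le1$, and the dominants $|\nabla u^N||\nabla\phi|$ and $|F(u^N,\nabla u^N)||\phi|$ lie in $L^1(B_r)$). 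Hence (\ref{weak equation for our u}) holds with $u$ replaced by any $u^N$.

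\textbf{Passing to the limit $N\to\infty$.} The crucial point is that the estimate inside the proof of Lemma \ref{First lemma} is already uniform in $N$: since $0<f_N\nearrow f$ and $|\nabla f_N|\le\sum_{i=1}^\infty a_i/|x-p_i|$, and the two-term lower bound $f_N^2\ge a_i^2\log^2|x-p_i|^{-1}+2\lambda a_i\log|x-p_i|^{-1}$ holds for every $N\ge i$ (it uses only the first and the $i$-th summand of $f_N$), the chain (\ref{estimating of gradient of u}) gives, for all $N$,
\[
  |\nabla u^N|^2=\frac{|\nabla f_N|^2}{f_N^2}
  \le\tilde C\sum_{i=1}^\infty\frac{a_i^{1-2\beta-1/q}}{|x-p_i|^2\log^{1+1/q}|x-p_i|^{-1}}=:g ,
\]
with $g\in L^1(B_r)$ by (\ref{integral})--(\ref{two conditions}). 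Off the null set $P$ one has $f_N\to f$ together with the convergence of the corresponding derivative series, hence $u^N\to u$ and $\nabla u^N\to\nabla u$ a.e.; combined with $|\nabla u^N|^2\le g$ this in fact gives $\nabla u^N\to\nabla u$ strongly in $L^2(B_r)$. Since $|\nabla u^N\cdot\nabla\phi|\le g^{1/2}\|\nabla\phi\|_\infty\in L^1(B_r)$ (because $g^{1/2}\in L^2(B_r)$) and $|F(u^N,\nabla u^N)\phi|\le 2g\|\phi\|_\infty\in L^1(B_r)$, dominated convergence passes both sides of the identity from the previous step to the limit, which yields (\ref{weak equation for our u}) for $u$.

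\textbf{The main obstacle.} Once this roadmap is in place there is no deep difficulty; the single point requiring care is the \emph{$N$-uniform} domination $|\nabla u^N|^2\le g$ — i.e.\ observing that the one computation behind Lemma \ref{First lemma} simultaneously controls all the truncations, so that dominated convergence is legitimately applicable in the last step. The remaining ingredients (the capacity/cutoff step, the bound $|F(u,v)|\le 2|v|^2$, and the $L^1$-bookkeeping) are routine.
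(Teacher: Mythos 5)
Your proposal is correct and follows essentially the same route as the paper: first establish the weak identity for each $u^N$ by excising $P^N$ with logarithmic cutoffs whose gradients are small in $L^2$ (the paper uses the $\log\log$ truncations $\zeta_i^k$ where you use generic $\eta_\varepsilon$, but these play the identical role), then pass to the limit $N\to\infty$ via dominated convergence with the $N$-uniform majorant $|\nabla u^N|^2\le\Phi\in L^1(B_r)$ extracted from the proof of Lemma~\ref{First lemma}. The only cosmetic difference is that your smooth $\eta_\varepsilon$ sidesteps the small issue of integrating by parts against a merely Lipschitz test function, which the paper glosses over.
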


\begin{proof}
      Firstly we show that $u^N$ satisfies (\ref{weak equation for our u}) with
     \[
        F^N := F(u^N, \nabla u^N).
     \]
     We take advantage of the fact that $W^{1,2}$  includes unbounded functions. Recall that the function 
     \[
         \zeta_{i}(x) := \begin{cases}
             \log\log \frac{1}{|x - p_i|} \hspace{0.5 cm} &|x - p_i| < \frac{1}{e},\\
             0  & |x - p_i| \geq \frac{1}{e}
         \end{cases}
     \]
     belongs to $W^{1,2}(B_r)$ for $i \in \{1,\ldots, N\}$. We construct the sequence $\zeta^{k}_{i} \in W^{1,2}(B_r)$ for $k \in \mathbb{N}$ in the next way:
    \[
        \zeta^{k}_{i}(x) :=  \max\{\min \{\zeta_{i}(x) - k,\, 1\},\, 0\}.
    \]
    Note that $\zeta_{i}^{k}(x) \neq 0$  on the ball $B(p_i, r_k)$ for $r_k := e^{-e^{k}}$ and $\zeta_{i}^{k}(p_i) = 1.$ Also we note that 
    \[
        \nabla \zeta_{i}^k = \begin{cases}
            \nabla \zeta_{i} \hspace{0.5 cm} &\text{if}\ \  \zeta_i \in (k,\, k + 1),\\
            0 & \text{elsewhere}.
        \end{cases}  
    \]
    For each $x \in B_r$ we have a pointwise convergence $\zeta_{i}^k(x) \rightarrow 0$ and 
    \begin{equation*}
    \begin{split}
        \|\nabla \zeta_i^k\|_{L^2(B_r)}^2 &= \int_{B_r}|\nabla \zeta_i^k|^2\, dx \\
        &= \int_{B(p_i, r_k) \setminus B(p_i, r_{k + 1})}|\nabla \zeta_i^k|^2\, dx 
        \xrightarrow{k\rightarrow \infty} 0.
    \end{split}
    \end{equation*}
    In the same way we show that
    \begin{equation*}
    \begin{split}
        \|\zeta^{k}_i\|_{L^2(B_r)}^2 &= \int_{B_r}|\zeta^k_i|^2\, dx 
         = \int_{B(p_i,r_k)} |\zeta^k_i|^2\, dx \xrightarrow{k \rightarrow \infty} 0.
        \end{split}
    \end{equation*}
    Hence $\zeta^k \xrightarrow{k \rightarrow \infty} 0$ in $W^{1,2}(B_r).$ Let 
    \[
        \zeta^{k} := \zeta^{k}_{1} + \zeta^{k}_{2} + \ldots + \zeta^{k}_{N}.      
    \]
    It is worth noticing that for $k > 0$ large enough, all   $\zeta_{i}^k$, $i = 1,\ldots, N$ have disjoint supports. 
     Now we decompose each $\phi \in C^{\infty}_{0}(B_r)$ as follows
     \[
     \phi = \zeta^{k}\phi + \hspace{-0.75em}\underbrace{\phi(1 - \zeta^k).}_{\text{supported in }\Tilde{B}^N}
     \]
     We write $u^N = u_1^N = \sin\log \left[\sum_{i = 1}^N a_i \log\frac{1}{|x - p_i|}\right]$ and $F^N = F_1^N$ as demonstrating the weak Equation (\ref{weak equation for our u}) for the entire $u^N = (u_1^N, u_2^N)$ and $F^N$ is the same as demonstrating it for the first coordinate. 
\begin{equation*}
    \begin{split}
        \int_{B_r}\nabla u^N \nabla \phi \, dx &= \int_{B_r}\nabla u^N \nabla( \zeta^{k}\phi + \phi(1 - \zeta^k))\, dx \\
        & = \int_{B_r}\nabla u^N \nabla (\zeta^{k}\phi)\, dx + \int_{B_r}\nabla u^N \nabla(\phi(1 - \zeta^k))\, dx\\
        & = \text{I}_{k} + \text{II}_{k}.
    \end{split}
\end{equation*}
The support of $\phi(1-\zeta^k)$ is contained in $\Tilde{B}^N$. Hence
\[
    \text{II}_k = \int_{B_r}F^N\phi(1 - \zeta^k)\, dx \xrightarrow{k\rightarrow \infty} \int_{B_r}F^N\phi\, dx.
\]
By computing the derivative in the first term and using H\"older inequality we get
\begin{equation*}
    \begin{split}
        \text{I}_k &= \int_{B_r}\nabla u^N \phi \nabla \zeta^k\, dx + \int_{B_r}\nabla u^N \zeta^k \nabla \phi\, dx \\
        & \leq \|\phi\|_{\infty}\|\nabla u^N\|_{L^2}\|\nabla \zeta^k\|_{L^2} + \|\nabla \phi\|_{\infty}\|\nabla u^N\|_{L^2}\|\zeta^k\|_{2} \xrightarrow{k \rightarrow \infty} 0.
    \end{split}
\end{equation*}
This shows that for each finite $N$ the Equation (\ref{weak equation for our u}) holds in a weak sense, i.e. 
\begin{equation}\label{helper 1}
    \int_{B_r}\nabla u^N \nabla \phi \, dx = \int_{B_r}F^N \phi\, dx.
\end{equation}
To show that the same holds for initial $u = \lim\limits_{N \rightarrow \infty} u^N$ we use a dominated convergence theorem. Note that for almost every $x \in B_r$ we have $u^{N}(x) \rightarrow u(x)$ and $\nabla u^N(x) \rightarrow \nabla u(x).$ Firstly we show that there exists a function $g \in L^1(B_r)$ such that for almost every $x \in B_r$ and every $N \in \mathbb{N}$
\[
    |F^N(x)| \leq g(x).
\]
Indeed, using the fact that $\sin x,\, \cos x \leq 1$ it follows
\[
    |F^N(x)| = 2|\nabla u^N|^2 \frac{u_1^N + u_2^N}{1 + |u^N|^2} \leq 2|\nabla u^N|^2.
\]
So, by estimating $|\nabla u^N|^2$, we can estimate $F^N$. We proceed in the same way as in the proof of Lemma \ref{First lemma}. The Equation (\ref{estimating of gradient of u})  gives us 
\begin{equation}
\begin{split}
    |\nabla u^N|^2 &\leq C \sum_{i = 1}^{N}\frac{a_i^{1 - 2\beta - 1/q}}{|x-p_i|^2\log^{1 + 1/q}|x - p_i|^{-1}} \\
    & \leq C \sum_{i = 1}^{\infty}\frac{a_i^{1 - 2\beta - 1/q}}{|x-p_i|^2\log^{1 + 1/q}|x - p_i|^{-1}} := \Phi(x) \in L^{1}(B_r),
\end{split}
\end{equation}
where last inequality holds because every $|x - p_i|^2\log^{1 + 1/q}\frac{1}{|x - p_i|} > 0$ on $B_r$. \\
To estimate the left hand side of (\ref{helper 1}) we firstly recall that it is enough to show that 
\[
    \sup_{n} \left|\int_{B_r}\nabla u^N \nabla \phi\, dx \right| < \infty. 
\]
We write
\begin{equation}
    \begin{split}
        \sup_{N}\left|\int_{B_r}\nabla u^N \nabla \phi\, dx \right| &\leq \sup_{N}\|\nabla u^N\|_{L^2(B_r)}\|\nabla \phi\|_{L^2(B_r)} \\
        \leq C(\phi)\|\Phi\|_{L^{1}(B_r)},
    \end{split}
\end{equation}
where $C(\phi)$ is a constant depending on $\phi.$ This ends the proof of Lemma \ref{Second lemma}.
\end{proof}

\section*{Acknowledgments}
The author would like to express sincere gratitude to Michał Wojciechowski for invaluable support in computations, to Paweł Strzelecki for his indispensable insights, and to Maciej Rzeszut for his consistent consultations on related problems.
\printbibliography

\end{document}